\newtheorem{theorem}{Theorem}[section]
\newtheorem{lemma}[theorem]{Lemma}
\newtheorem{corollary}[theorem]{Corollary}
\theoremstyle{definition}
\theoremstyle{remark}
\newtheorem{remark}[theorem]{Remark}
\numberwithin{equation}{section}
\begin{document}
\title[An operator equality and its norm inequalities]
{An operator equality involving a continuous field of operators and
its norm inequalities}

\author[M.S. Moslehian and F. Zhang]{Mohammad Sal Moslehian and Fuzhen Zhang}
\address{Department of Mathematics, Ferdowsi University of Mashhad,
P.O. Box 1159, Mashhad 91775, Iran;
\newline Centre of Excellence in Analysis on
Algebraic Structures (CEAAS), Ferdowsi University of Mashhad, Iran.}
\email{moslehian@ferdowsi.um.ac.ir and moslehian@ams.org}
\address{Farquhar College of Arts and Sciences, Nova Southeastern
University, 3301 College Ave, Fort Lauderdale, FL 33314,
USA;\newline College of Mathematics and Systems Science, Shenyang
Normal University, Shenyang, Liaoning Province 110034,  China.}
\email{zhang@nova.edu}

\subjclass[2000]{Primary 47A62; secondary 46C15, 47A30, 15A24.}

\keywords{Bounded linear operator; Characterization of inner product
space; Hilbert space; $Q$-norm; Norm inequality; Schatten $p$-norm;
continuous filed of operators, Bouchner integral.}

\begin{abstract} Let ${\mathfrak A}$ be a $C^*$-algebra, $T$ be a locally compact
Hausdorff space equipped with a probability measure $P$ and let
$(A_t)_{t\in T}$ be a continuous field of operators in ${\mathfrak
A}$ such that the function $t \mapsto A_t$ is norm continuous on $T$
and the function $t \mapsto \|A_t\|$ is integrable. Then the
following equality including Bouchner integrals holds
\begin{eqnarray}\label{oi}
\int_T\left|A_t - \int_TA_s{\rm d}P\right|^2 {\rm
d}P=\int_T|A_t|^2{\rm d}P - \left|\int_TA_t{\rm d}P\right|^2\,.
\end{eqnarray}
This equality is related both to the notion of variance in
statistics and to a characterization of inner product spaces. With
this operator equality, we present some uniform norm and Schatten
$p$-norm inequalities.
\end{abstract}
\maketitle


\section{Introduction and preliminaries}

Many interesting characterizations of inner product spaces have been
introduced (see, e.g., \cite{AMI}). It is shown by Th. M. Rassias
\cite{RAS} that a normed space $X$ (with norm $\|\cdot \|$) is an
inner product space if and only if for any finite set of vectors
$x_1, \cdots, x_n\in X$,
\begin{eqnarray}\label{0}
\sum_{i=1}^n\Big\|x_i-\frac{1}{n}\sum_{j=1}^n x_j\Big\|^2=
\sum_{i=1}^n\|x_i\|^2 - n\Big\|\frac{1}{n}\sum_{j=1}^nx_j\Big\|^2\,.
\end{eqnarray}

This equality is of fundamental importance in the study of normed
spaces and inner product spaces since it reveals a basic relation
between the two sorts of spaces. From statistical point of view, let
$(X, \mu)$ be a probability measure space and $f$ be a random
variable, i.e., $f$ is an element of $L^2(X,\mu)$. With the variance
of $f$ defined by
$$\textrm{Var}(f)=E(|f-E(f)|^2),$$
where $E(f)=\int_X f{\rm d}\mu$ denotes the expectation of $f$,
\eqref{0} resembles (or vice versa) the well-known equality
$$\textrm{Var}(f)=E(|f|^2)-|E(f)|^2.$$

Let ${\mathbb B}({\mathcal H})$ be the algebra of all bounded linear
operators on a separable complex Hilbert space ${\mathcal H}$
endowed with inner product $\langle \cdot , \cdot \rangle $ and the
operator norm $\|\cdot\|_\infty$. We denote the absolute value of $A
\in {\mathbb B}({\mathcal H})$ by $|A|=(A^*A)^{1/2}$. For $x, y \in
{\mathcal H}$, the rank one operator $x \otimes y$ is defined on
${\mathcal H}$ by $(x\otimes y)(z)=\langle z,y\rangle x$.

Let ${\mathfrak A}$ be a $C^*$-algebra and let $T$ be a locally
compact Hausdorff space. A field $(A_t)_{t\in T}$ of operators in
${\mathfrak A}$ is called a continuous field of operators if the
function $t \mapsto A_t$ is norm continuous on $T$. If $\mu(t)$ is a
Radon measure on $T$ and the function $t \mapsto \|A_t\|$ is
integrable, one can form the Bochner integral $\int_{T}A_t{\rm
d}\mu(t)$, which is the unique element in ${\mathfrak A}$ such that
$$\varphi\left(\int_TA_t{\rm d}\mu(t)\right)=\int_T\varphi(A_t){\rm d}\mu(t)$$
for every linear functional $\varphi$ in the norm dual ${\mathfrak
A}^*$ of ${\mathfrak A}$; cf. \cite[Section 4.1]{H-P}.

Let $A\in {\mathbb B}({\mathcal H})$ be a compact operator and let
$0 < p < \infty$. The Schatten $p$-norm ($p$-quasi-norm) for $1 \leq
p < \infty$ $(0< p < 1)$ is defined by $\|A\|_p=(\textrm{tr}
|A|^p)^{1/p}$, where $\textrm{tr}$ is the usual trace functional.
Clearly, for $p, \, q>0$,
$$\|A\|_p^q= \left\|\,|A|^2\,\right\|_{p/2}^{q/2}\,.$$
\noindent For $p>0$, the Schatten $p$-class, denoted by $C_p$, is
defined to be the two-sided ideal in ${\mathbb B}({\mathcal H})$ of
those compact operators $A$ for which $\|A\|_p$ is finite. In
particular, $C_1$ and $C_2$ are the trace class and the
Hilbert-Schmidt class, respectively. For more information on the
theory of the Schatten $p$-classes, the reader is referred to
\cite{BHA, SIM}.

Since $C_2$ is a Hilbert space under the inner product $\langle A,
B\rangle=\textrm{tr} (B^*A)$, it follows from \eqref{0} that if
$A_1, \cdots, A_n \in C_2$, then
\begin{eqnarray}\label{4}
\sum_{i=1}^n\Big\|A_i-\frac{1}{n}\sum_{j=1}^n A_j\Big\|_2^2=
\sum_{i=1}^n\|A_i\|_2^2 -
n\Big\|\frac{1}{n}\sum_{j=1}^nA_j\Big\|_2^2\,.
\end{eqnarray}

In this paper, we establish a general operator version of equality
\eqref{0}, from which we deduce an extension of \eqref{0}. We
present some inequalities concerning various norms such as Schatten
$p$-norms that form natural generalizations (in inequalities)  of
the identity \eqref{4} (see, e.g., \cite{H-K-M}). It seems that the
inequalities related to Schatten $p$-norms are useful in operator
theory and mathematical physics and are interesting in their own
right.

\section{Main results}

We begin by establishing an operator version of equality \eqref{0}
involving continuous fields of operators and integral means of
operators.

\begin{theorem}\label{th1}
Let ${\mathfrak A}$ be a $C^*$-algebra, $T$ be a locally compact
Hausdorff space equipped with a probability measure $P$ and let
$(A_t)_{t\in T}$ be a continuous field of operators in ${\mathfrak
A}$ such that the function $t \mapsto A_t$ is norm continuous on $T$
and the function $t \mapsto \|A_t\|$ is integrable. Then
\begin{eqnarray}\label{int}
\int_T\left|A_t - \int_TA_s{\rm d}P(s)\right|^2 {\rm
d}P(t)=\int_T|A_t|^2{\rm d}P(t) - \left|\int_TA_t{\rm
d}P(t)\right|^2
\end{eqnarray}
\end{theorem}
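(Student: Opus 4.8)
The plan is to expand the integrand on the left-hand side using the $C^*$-algebra identity $|X-Y|^2 = (X-Y)^*(X-Y)$ and then integrate term by term, exploiting the linearity of the Bochner integral. Writing $M := \int_T A_s\,{\rm d}P(s)$ for the integral mean (a fixed element of $\mathfrak{A}$, independent of $t$), the integrand becomes
\begin{eqnarray*}
\left|A_t - M\right|^2 = (A_t - M)^*(A_t - M) = A_t^*A_t - A_t^*M - M^*A_t + M^*M\,.
\end{eqnarray*}
I would then integrate each of the four summands over $T$ with respect to $P(t)$, using that $M$ and $M^*$ are constants that can be pulled outside the integral.

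The key computations are the following. First, $\int_T A_t^*A_t\,{\rm d}P(t) = \int_T |A_t|^2\,{\rm d}P(t)$, which is exactly the first term on the right-hand side. Second, since $M$ is independent of $t$, I would use the fact that the map $X \mapsto XM$ (right multiplication) is a bounded linear operator on $\mathfrak{A}$ and hence commutes with the Bochner integral, giving $\int_T A_t^*M\,{\rm d}P(t) = \left(\int_T A_t^*\,{\rm d}P(t)\right)M = M^*M$; here I use that taking adjoints commutes with the Bochner integral, so $\int_T A_t^*\,{\rm d}P(t) = \left(\int_T A_t\,{\rm d}P(t)\right)^* = M^*$. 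Similarly $\int_T M^*A_t\,{\rm d}P(t) = M^*\left(\int_T A_t\,{\rm d}P(t)\right) = M^*M$. Finally, since $P$ is a probability measure, $\int_T M^*M\,{\rm d}P(t) = M^*M = \left|\int_T A_t\,{\rm d}P(t)\right|^2$.

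Collecting these, the left-hand side becomes $\int_T|A_t|^2\,{\rm d}P(t) - M^*M - M^*M + M^*M = \int_T|A_t|^2\,{\rm d}P(t) - |M|^2$, which is precisely the right-hand side of \eqref{int}. This mirrors exactly the statistical identity $\mathrm{Var}(f) = E(|f|^2) - |E(f)|^2$ recalled in the introduction, with $M$ playing the role of the expectation.

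The main technical point to justify carefully is that the Bochner integral interacts correctly with the algebraic operations: specifically, that left and right multiplication by a fixed element of $\mathfrak{A}$, as well as the adjoint operation, commute with $\int_T(\cdot)\,{\rm d}P$. Each of these is a bounded (real- or complex-) linear map on $\mathfrak{A}$, and bounded linear maps commute with Bochner integrals; this is the standard property recorded in the defining relation $\varphi\left(\int_T A_t\,{\rm d}P(t)\right) = \int_T \varphi(A_t)\,{\rm d}P(t)$ cited from \cite[Section 4.1]{H-P}, extended from functionals to bounded operators on $\mathfrak{A}$. The hypotheses of norm continuity of $t \mapsto A_t$ and integrability of $t \mapsto \|A_t\|$ guarantee that every integral appearing above is well-defined, since $\|A_t^*A_t\| = \|A_t\|^2$ and the products with the bounded constant $M$ are likewise dominated by integrable functions. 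No genuine obstacle is expected; the proof is a direct expansion, and the only care needed is in invoking the functorial properties of the Bochner integral rather than manipulating the integrals formally.
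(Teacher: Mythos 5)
Your proposal is correct and follows essentially the same route as the paper's own proof: expand $|A_t - M|^2 = A_t^*A_t - A_t^*M - M^*A_t + M^*M$, integrate term by term, and use that $P$ is a probability measure together with $\int_T A_t^*\,{\rm d}P(t) = \left(\int_T A_t\,{\rm d}P(t)\right)^*$. Your explicit justification that left/right multiplication by a fixed element and the adjoint operation commute with the Bochner integral is a point the paper leaves implicit, so your write-up is, if anything, slightly more careful.
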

\begin{proof}
\begin{eqnarray*}
\int_T\Big|A_t&-&\int_TA_s{\rm d}P(s)\Big|^2 {\rm d}P(t)\\
&=& \int_T\left(A_t - \int_TA_s{\rm d}P(s)\right)^*\left(A_t -
\int_TA_r{\rm d}P(r)\right) {\rm d}P(t)\\
&=&\int_T|A_t|^2{\rm d}P(t)-\int_T\left(A_t^* \int_T A_r{\rm d}P(r)\right){\rm d}P(t)\\
&&- \int_T\left(\Big(\int_T A_s{\rm d}P(s)\Big)^*A_t\right){\rm d}P(t)\\
&&+ \left(\int_T A_s{\rm d}P(s)\right)^*\int_T A_r{\rm d}P(r)\int_T{\rm d}P(t)\\
&=&\int_T|A_t|^2{\rm d}P(t)- \left(\int_T A_s{\rm d}P(s)\right)^*\int_T A_r{\rm d}P(r)\\
&& \qquad \Big({\rm by~} \int_T{\rm d}P(t)=1 {\rm \,\,~and~} \int_T A_t^*{\rm d}P(t)=\big(\int_T A_t{\rm d}P(t)\big)^*\Big)\\
&=&\int_T|A_t|^2{\rm d}P(t)- \left|\int_T A_t{\rm d}P(t)\right|^2\,.
\end{eqnarray*}
\end{proof}


\begin{corollary}\label{cormain}
Let $A_1, \cdots, A_n \in {\mathbb B}({\mathcal H})$. Then

{\rm (i)} for any set of nonnegative numbers $t_1, \cdots, t_n$ with
$\sum_{i=1}^nt_i=1$,
\begin{eqnarray}\label{oit}
\sum_{i=1}^nt_i\left|A_i-\sum_{j=1}^n t_jA_j\right|^2=
\sum_{i=1}^nt_i|A_i|^2 -\left|\sum_{j=1}^nt_jA_j\right|^2\,;
\end{eqnarray}
{\rm (ii)}
\begin{eqnarray}\label{oin}
\sum_{i=1}^n\left|A_i-\frac{1}{n}\sum_{j=1}^n A_j\right|^2=
\sum_{i=1}^n|A_i|^2 - n\left|\frac{1}{n}\sum_{j=1}^nA_j\right|^2\,.
\end{eqnarray}
\end{corollary}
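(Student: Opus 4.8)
The plan is to obtain both parts as direct specializations of Theorem~\ref{th1}, applied to a finitely supported probability measure. First I would take ${\mathfrak A} = {\mathbb B}({\mathcal H})$, which is a $C^*$-algebra, and let $T = \{1, \ldots, n\}$ be equipped with the discrete topology, so that $T$ is a compact (hence locally compact) Hausdorff space. Choosing the probability measure $P$ that assigns mass $t_i$ to the point $i$, that is $P(\{i\}) = t_i$ with $\sum_{i=1}^n t_i = 1$, the field $(A_t)_{t \in T}$ becomes simply the finite collection $A_1, \ldots, A_n$.

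The next step is to check that the hypotheses of Theorem~\ref{th1} are met. Since $T$ is discrete, every field $(A_t)_{t\in T}$ is automatically norm continuous, and the function $t \mapsto \|A_t\|$ is trivially integrable, being supported on finitely many points. Under this atomic measure the Bochner integrals collapse to finite sums: $\int_T A_s\,{\rm d}P = \sum_{j=1}^n t_j A_j$, $\int_T |A_t|^2\,{\rm d}P = \sum_{i=1}^n t_i |A_i|^2$, and $\int_T |A_t - \int_T A_s\,{\rm d}P|^2\,{\rm d}P = \sum_{i=1}^n t_i |A_i - \sum_{j=1}^n t_j A_j|^2$. Substituting these identifications into \eqref{int} produces \eqref{oit} immediately, establishing part~(i). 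Part~(ii) then follows from part~(i) by setting $t_i = 1/n$ for every $i$ and multiplying the resulting identity through by $n$; the coefficient $n$ on the right-hand side of \eqref{oin} is exactly what appears upon clearing this common weight.

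I expect no genuine obstacle here: the entire content is the observation that Theorem~\ref{th1} already subsumes the finite case once one selects a purely atomic measure. The only care required is in confirming that the regularity hypotheses hold (which is vacuous for a finite discrete space) and in correctly matching each Bochner integral with its corresponding finite sum, after which both \eqref{oit} and \eqref{oin} drop out by elementary algebra.
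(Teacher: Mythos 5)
Your proposal is correct and follows exactly the paper's own proof: part (i) is Theorem~\ref{th1} specialized to $T=\{1,\dots,n\}$ with $P(\{i\})=t_i$, and part (ii) is the case $t_i=1/n$ of part (i), cleared of the common factor $1/n$. Your added verification of the hypotheses (discreteness giving continuity and integrability) and the explicit multiplication by $n$ are details the paper leaves implicit, but the route is the same.
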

\begin{proof}
(i) Take $T=\{1, \cdots, n\}$ and $P(\{i\})=t_i$ in Theorem
\ref{th1}.

\noindent (ii) Put $t_i=1/n$ in (i).
\end{proof}
As an immediate consequence of \eqref{oit}, we get the following
known AM-QM operator inequality; cf. \cite[Theorem 7]{ZHA}.

\begin{corollary}
Let $A_1, \cdots, A_n \in {\mathbb B}({\mathcal H})$. Then for any
set of nonnegative numbers $t_1, \cdots, t_n$ with
$\sum_{i=1}^nt_i=1$,
\begin{eqnarray*}
\left|\sum_{i=1}^nt_iA_i\right|^2 \leq \sum_{i=1}^nt_i|A_i|^2 \,.
\end{eqnarray*}
\end{corollary}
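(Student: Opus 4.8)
The plan is to read the inequality off directly from the operator equality \eqref{oit} established in Corollary \ref{cormain}(i), rather than to argue from scratch. The essential observation is a completely elementary one about positivity: for any operator $X \in {\mathbb B}({\mathcal H})$, the operator $|X|^2 = X^*X$ is positive, since $\langle X^*X z, z\rangle = \|Xz\|^2 \geq 0$ for every $z \in {\mathcal H}$. This is what converts the equality \eqref{oit} into the desired inequality with no further work.

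First I would simply rewrite the target quantity using \eqref{oit}, which gives
\begin{eqnarray*}
\sum_{i=1}^n t_i|A_i|^2 - \left|\sum_{j=1}^n t_jA_j\right|^2 = \sum_{i=1}^n t_i\left|A_i - \sum_{j=1}^n t_jA_j\right|^2\,.
\end{eqnarray*}
Then I would observe that the right-hand side is a finite sum of the positive operators $\bigl|A_i - \sum_{j} t_jA_j\bigr|^2$ scaled by the nonnegative coefficients $t_i$. Because a nonnegative linear combination of positive operators is again positive, the right-hand side is a positive operator; consequently so is the left-hand side, and rearranging yields $\bigl|\sum_{i=1}^n t_iA_i\bigr|^2 \leq \sum_{i=1}^n t_i|A_i|^2$ in the Loewner order, which is exactly the claim.

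There is essentially no obstacle in this argument, since it is a one-line consequence of the identity \eqref{oit} once positivity is noted. The only points requiring a moment's attention are the two elementary facts invoked above, namely that $|X|^2 \geq 0$ for every operator $X$ and that nonnegative combinations of positive operators remain positive; neither needs more than the definition of positivity. Everything else is a direct substitution into the already-proved equality, so the proof is immediate.
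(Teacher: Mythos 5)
Your proof is correct and matches the paper's own reasoning: the paper presents this corollary as ``an immediate consequence of \eqref{oit},'' i.e.\ precisely your observation that the right-hand side of \eqref{oit} is a nonnegative combination of positive operators, hence positive. Nothing in your argument deviates from that route, and the two elementary positivity facts you invoke are exactly what makes the consequence immediate.
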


The following result compares the mean of the squares of the
operators to the square of the mean, and  gives some bounds of their
difference.

\begin{corollary}
Let $A_1, \cdots, A_n\in {\mathbb B}({\mathcal H}) $ be positive
operators such that $0 \leq m_i I \leq A_i \leq M_i I$ for some
nonnegative scalars $m_i$ and $M_i$ and all $1 \leq i \leq n$, and
let $t_1, \cdots, t_n$ be nonnegative numbers such that
$\sum_{i=1}^nt_i=1$. Then
\begin{eqnarray*}
\sum_{i=1}^nt_i\beta_i^2I \leq \sum_{i=1}^nt_iA_i^2 -
\Big(\sum_{i=1}^nt_iA_i\Big )^2 \leq \sum_{i=1}^nt_i\alpha_i^2I\,,
\end{eqnarray*}
where $$\alpha_i=\max\big \{\left|M_i-\sum_{i=1}^nt_im_i\right|,\;
\left|m_i-\sum_{i=1}^nt_iM_i\right|\big \}$$ and $$\beta_i=\min\big
\{\left|M_i-\sum_{i=1}^nt_im_i\right|,\;
\left|m_i-\sum_{i=1}^nt_iM_i\right|\big \}.$$
\end{corollary}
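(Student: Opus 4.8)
The plan is to reduce the statement to the identity \eqref{oit} of Corollary~\ref{cormain} and then estimate each summand on its right-hand side by scalar multiples of $I$ through the continuous functional calculus. Set $S=\sum_{j=1}^n t_jA_j$. Since each $A_i$ is positive, hence self-adjoint, so are $S$ and every difference $A_i-S$; therefore $\big|A_i-S\big|^2=(A_i-S)^2$, $|A_i|^2=A_i^2$ and $|S|^2=S^2$. Substituting these into \eqref{oit} rewrites the middle term of the assertion as $\sum_{i=1}^n t_i(A_i-S)^2$, so the whole claim becomes the two-sided operator inequality
\[
\sum_{i=1}^n t_i\beta_i^2I \;\le\; \sum_{i=1}^n t_i(A_i-S)^2 \;\le\; \sum_{i=1}^n t_i\alpha_i^2I ,
\]
which I would try to prove termwise and then sum against the weights $t_i\ge 0$.

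Next I would locate the spectrum of each self-adjoint operator $A_i-S$. Averaging the bounds $m_jI\le A_j\le M_jI$ with the weights $t_j$ (which is legitimate because $t_j\ge0$ and $\sum_j t_j=1$) gives $\big(\sum_j t_jm_j\big)I\le S\le\big(\sum_j t_jM_j\big)I$. Combining this with $m_iI\le A_i\le M_iI$ yields
\[
\Big(m_i-\sum_{j=1}^n t_jM_j\Big)I \;\le\; A_i-S \;\le\; \Big(M_i-\sum_{j=1}^n t_jm_j\Big)I ,
\]
so that the spectrum of $A_i-S$ is contained in the interval $[a_i,b_i]$ with $a_i=m_i-\sum_j t_jM_j$ and $b_i=M_i-\sum_j t_jm_j$. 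By the very definitions in the statement, $\alpha_i=\max\{|a_i|,|b_i|\}$ and $\beta_i=\min\{|a_i|,|b_i|\}$.

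Finally I would apply the functional calculus for $x\mapsto x^2$ to $A_i-S$: on $[a_i,b_i]$ one has $\beta_i^2\le x^2\le\alpha_i^2$, and monotonicity of the calculus then gives $\beta_i^2I\le(A_i-S)^2\le\alpha_i^2I$; multiplying by $t_i$ and summing delivers the claimed estimates. The upper bound is the easy half, since it follows at once from $\|A_i-S\|\le\max\{|a_i|,|b_i|\}=\alpha_i$, whence $(A_i-S)^2\le\alpha_i^2I$. I expect the \emph{lower} bound to be the main obstacle: the scalar inequality $\beta_i^2\le x^2$ is valid on $[a_i,b_i]$ exactly when $0$ does not lie strictly between $a_i$ and $b_i$ (if $0\in(a_i,b_i)$ the minimum of $x^2$ on the interval is $0$, not $\beta_i^2$). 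This is therefore the step where the hypotheses on $m_i,M_i$ must be invoked carefully, and it is worth checking whether an additional positivity condition on the endpoints is implicitly needed for the left-hand inequality to hold in full generality.
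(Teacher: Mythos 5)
Your proposal follows exactly the same route as the paper's own proof: self-adjointness turns \eqref{oit} into the statement about $\sum_i t_i(A_i-S)^2$, and each summand is then estimated through the functional calculus via the spectral inclusion $\sigma(A_i-S)\subseteq[a_i,b_i]$, $a_i=m_i-\sum_j t_jM_j$, $b_i=M_i-\sum_j t_jm_j$. Your upper bound is complete and coincides with the paper's.

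The concern you raise about the lower bound is not a pedantic caveat; it is a genuine gap, and the paper's own proof commits precisely the error you anticipated: it asserts $\beta_i^2 I\le\big|A_i-\sum_j t_jA_j\big|^2$ ``by the functional calculus,'' which requires the scalar inequality $\beta_i^2\le x^2$ on all of $[a_i,b_i]$, and that fails whenever $0$ lies strictly inside this interval (there the minimum of $x^2$ is $0$, not $\beta_i^2$). In fact the left-hand inequality of the corollary is false as stated. Take $n=2$, $t_1=t_2=\tfrac12$, and on ${\mathcal H}=\mathbb{C}^2$ let $A_1=\mathrm{diag}(1,0)$, $A_2=\mathrm{diag}(0,1)$, with $m_1=m_2=0$ and $M_1=M_2=1$. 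Then $\sum_i t_iA_i^2-\big(\sum_i t_iA_i\big)^2=\tfrac12 I-\tfrac14 I=\tfrac14 I$, while $\beta_1=\beta_2=\min\{|1-0|,\,|0-1|\}=1$, so the claimed lower bound reads $I\le\tfrac14 I$, which is false. A correct version replaces $\beta_i$ by $\mathrm{dist}\big(0,[a_i,b_i]\big)=\max\{a_i,\,-b_i,\,0\}$, which vanishes when $0\in[a_i,b_i]$; equivalently, the stated $\beta_i$ is valid only under the additional hypothesis that for each $i$ either $m_i\ge\sum_j t_jM_j$ or $M_i\le\sum_j t_jm_j$. So your instinct was right: the hypotheses as given do not suffice for the left-hand inequality, and no completion of this argument (which is the paper's argument) can close that step.
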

\begin{proof}
It is sufficient to notice that for each $A_i$, by the functional
calculus,
\begin{align*}
\min\Big \{\Big|M_i-\sum_{i=1}^nt_im_i\Big|^2&,\;
\Big|m_i-\sum_{i=1}^nt_iM_i\Big|^2\Big \} \leq
\Big|A_i-\sum_{j=1}^nt_jA_j\Big|^2\\&\leq \max\Big
\{\Big|M_i-\sum_{i=1}^nt_im_i\Big|^2,\;
\Big|m_i-\sum_{i=1}^nt_iM_i\Big|^2\Big \}
\end{align*}
and use \eqref{oit}.
\end{proof}


By a $Q$-norm on a subspace ${\mathcal D}$ of ${\mathbb B}({\mathcal
H})$ we mean a norm $\|\cdot\|_Q$ for which there exists a norm
$\|\cdot\|_{\widehat{Q}}$ defined on a subspace $\widehat{{\mathcal
D}}$ such that $\|A\|_Q^2=\|A^*A\|_{\widehat{Q}}\,\,(A\in {\mathcal
D})$ (we implicitly assume that $A^*A\in \widehat{{\mathcal D}}$ if
and only if $A \in {\mathcal D}$). The operator norm
$\|\cdot\|_\infty$ and $\|\cdot\|_p$ (for $2 \leq p < \infty$) are
examples of $Q$-norms on ${\mathbb B}({\mathcal H})$; see, e.g.,
\cite[p.~89]{BHA} or \cite{B-K2}.

\begin{corollary}
Let $\|\cdot\|_Q$ be a $Q$-norm on a subspace ${\mathcal D}$ of
${\mathbb B}({\mathcal H})$, and let $t_1, \cdots, t_n$ be
nonnegative numbers such that $\sum_{i=1}^nt_i=1$. Then for any
$A_1, \cdots, A_n \in {\mathcal D}$ with $A_i^*A_j=0$ for $1 \leq
i\neq j\leq n$,
\begin{eqnarray*}
\Big \|\sum_{i=1}^n\sqrt{t_i}A_i\Big\|_Q^2 \leq \sum_{i=1}^n
t_i\Big\|A_i-\sum_{j=1}^n t_jA_j\Big\|_Q^2
+\Big\|\sum_{j=1}^nt_jA_j\Big\|_Q^2\,.
\end{eqnarray*}
\end{corollary}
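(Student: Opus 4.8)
The plan is to reduce the stated inequality to the exact operator equality \eqref{oit} by exploiting the orthogonality hypothesis, and then to apply the triangle inequality of $\|\cdot\|_{\widehat{Q}}$. First I would compute the absolute value of the operator appearing on the left. Since
\[
\Big|\sum_{i=1}^n\sqrt{t_i}A_i\Big|^2=\Big(\sum_{i=1}^n\sqrt{t_i}A_i\Big)^*\Big(\sum_{j=1}^n\sqrt{t_j}A_j\Big)=\sum_{i,j=1}^n\sqrt{t_it_j}\,A_i^*A_j,
\]
the assumption $A_i^*A_j=0$ for $i\neq j$ annihilates every off-diagonal term and leaves $\sum_{i=1}^n t_i|A_i|^2$. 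Hence, directly from the definition of a $Q$-norm,
\[
\Big\|\sum_{i=1}^n\sqrt{t_i}A_i\Big\|_Q^2=\Big\|\sum_{i=1}^n t_i|A_i|^2\Big\|_{\widehat{Q}}.
\]

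Next I would substitute the equality \eqref{oit} of Corollary \ref{cormain}(i), rewritten as $\sum_{i=1}^n t_i|A_i|^2=\sum_{i=1}^n t_i\big|A_i-\sum_{j=1}^n t_jA_j\big|^2+\big|\sum_{j=1}^n t_jA_j\big|^2$. Applying the triangle inequality for $\|\cdot\|_{\widehat{Q}}$ to peel off the square of the mean, and then once more, together with the positive homogeneity $\|t_iX\|_{\widehat{Q}}=t_i\|X\|_{\widehat{Q}}$ (valid since $t_i\geq 0$), to distribute over the remaining sum, yields
\[
\Big\|\sum_{i=1}^n t_i|A_i|^2\Big\|_{\widehat{Q}}\leq \sum_{i=1}^n t_i\Big\|\,\big|A_i-\sum_{j=1}^n t_jA_j\big|^2\Big\|_{\widehat{Q}}+\Big\|\,\big|\sum_{j=1}^n t_jA_j\big|^2\Big\|_{\widehat{Q}}.
\]
Reading the $Q$-norm definition in reverse turns each $\big\|\,|X|^2\big\|_{\widehat{Q}}$ back into $\|X\|_Q^2$, so the right-hand side becomes precisely the claimed bound, completing the argument.

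The step I expect to carry the real content is the first: the orthogonality hypothesis $A_i^*A_j=0$ is exactly what makes the absolute value of the weighted sum collapse to the weighted sum $\sum_{i=1}^n t_i|A_i|^2$ of absolute values, so that the left-hand side lines up with the left-hand side of \eqref{oit}; without it the cross terms $\sqrt{t_it_j}\,A_i^*A_j$ would spoil the identification. Everything after that is a routine invocation of \eqref{oit} and of the two defining properties of $\|\cdot\|_{\widehat{Q}}$, the triangle inequality and positive homogeneity. The one bookkeeping point requiring care is that every operator whose $\widehat{Q}$-norm is taken must lie in the subspace $\widehat{\mathcal D}$: each term $|X|^2$ does, because the corresponding $X$ lies in the subspace $\mathcal D$, and since $\widehat{\mathcal D}$ is itself a subspace the finite sums formed along the way remain inside it, so every norm written above is well defined.
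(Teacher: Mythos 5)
Your proof is correct and follows essentially the same route as the paper's: use the $Q$-norm definition, collapse $\big|\sum_i\sqrt{t_i}A_i\big|^2$ to $\sum_i t_i|A_i|^2$ via the orthogonality hypothesis, substitute the equality \eqref{oit}, and finish with the triangle inequality and homogeneity of $\|\cdot\|_{\widehat{Q}}$. You merely make explicit some steps the paper leaves implicit (the vanishing cross terms and the domain bookkeeping), which is fine.
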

\begin{proof}
\begin{eqnarray*}
\Big\|\sum_{i=1}^n \sqrt{t_i} A_i\Big\|_Q^2&=&\Big\|\,\Big |
\sum_{i=1}^n \sqrt{t_i}A_i\Big|^2\Big\|_{\widehat{Q}}\\
&=&\Big\| \sum_{i=1}^n t_i|A_i|^2\Big\|_{\widehat{Q}}\\
&\leq&\sum_{i=1}^n\Big\|\,t_i\Big|A_i-\sum_{j=1}^n
t_jA_j\Big|^2\Big\|_{\widehat{Q}} + \Big\|\,\Big|
\sum_{j=1}^nt_jA_j\Big|^2\Big\|_{\widehat{Q}}\\
&=& \sum_{i=1}^n t_i\Big\|A_i-\sum_{j=1}^n t_jA_j\Big\|_Q^2 +
\Big\|\sum_{j=1}^nt_jA_j\Big\|_Q^2\,.
\end{eqnarray*}
\end{proof}


\begin{remark}
The operators $A_i$ acting on a Hilbert space having the orthogonal
property  $A_i^*A_j=0$ for $1 \leq i\neq j\leq n$ are not uncommon.
For instance, let $(e_i)$ be an orthogonal family (not containing
zero) in ${\mathcal H}$ and define the operators $A_i: {\mathcal H}
\to {\mathcal H}$ by $A_i=\frac{e_i \otimes e_i}{\|e_i\|}$, $1 \leq
i \leq n$. Then $A_i$'s are positive operators in ${\mathbb
B}({\mathcal H})$ with $\|A_i\||_\infty=\|e_i\|$ for all $i$ and
$\|A_iA_j\|_\infty=|\langle e_i, e_j\rangle|$ for all $1 \leq i,j
\leq n$ (for details see \cite{DRA}).
\end{remark}


In the setting of Hilbert spaces, the known equality \eqref{0} can
be proved directly. In what follows we show that an extension of it
can also be obtained from equality \eqref{oit}.

\begin{corollary}
Let $x_1, \cdots, x_n\in {\mathcal H}$. Then
\begin{eqnarray*}
\sum_{i=1}^nt_i\Big\|x_i-\sum_{j=1}^n t_jx_j\Big\|^2=
\sum_{i=1}^nt_i\|x_i\|^2 - \Big\|\sum_{j=1}^nt_jx_j\Big\|^2\,.
\end{eqnarray*}
\end{corollary}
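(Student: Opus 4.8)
The plan is to deduce this vector identity from the already-proved operator identity \eqref{oit} by encoding each vector $x_i$ as a rank one operator. Fix any unit vector $e \in {\mathcal H}$ and set $A_i = x_i \otimes e \in {\mathbb B}({\mathcal H})$ for $1 \le i \le n$, where $x \otimes y$ is the rank one operator introduced in the preliminaries. The point of this choice is that the map $x \mapsto x \otimes e$ is linear and transports the Hilbert-space geometry of the $x_i$ into the absolute-value structure appearing in \eqref{oit}, so that the scalar identity we want falls out after reading off coefficients.

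First I would record the two elementary facts about rank one operators that I need: $(x \otimes y)^* = y \otimes x$ and $(u \otimes v)(x \otimes y) = \langle x, v\rangle\, (u \otimes y)$, both verified directly from the defining relation $(x\otimes y)(z) = \langle z, y\rangle x$. Using the first, $A_i^* = e \otimes x_i$; combining with the second gives $A_i^* A_j = \langle x_j, x_i\rangle\, (e \otimes e)$, and in particular $|A_i|^2 = A_i^* A_i = \|x_i\|^2\, (e \otimes e)$. Since the $t_j$ are real and $x \mapsto x \otimes e$ is linear, $\sum_{j} t_j A_j = \big(\sum_j t_j x_j\big) \otimes e$. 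Writing $w = \sum_j t_j x_j$, the same computation then yields $|A_i - \sum_j t_j A_j|^2 = \|x_i - w\|^2\, (e \otimes e)$ and $|\sum_j t_j A_j|^2 = \|w\|^2\, (e \otimes e)$.

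Substituting these three expressions into \eqref{oit}, every term becomes a scalar multiple of the single fixed operator $e \otimes e$, giving
$$\Big(\sum_{i=1}^n t_i \|x_i - w\|^2\Big)\,(e \otimes e) = \Big(\sum_{i=1}^n t_i \|x_i\|^2 - \|w\|^2\Big)\,(e \otimes e)\,.$$
Because $e$ is a unit vector, $e \otimes e \neq 0$, so the scalar coefficients must agree, which is precisely the claimed equality. Everything here is a direct application of Corollary~\ref{cormain}, so the only real care needed is in the rank one computations and in justifying the final cancellation. The step most likely to trip one up is keeping straight the conjugate-linearity of $x \otimes y$ in its second slot, but since the weights $t_j$ are nonnegative reals this causes no difficulty.
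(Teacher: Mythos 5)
Your proposal is correct and follows essentially the same route as the paper: the paper also sets $A_i = x_i \otimes e$ for a fixed nonzero vector $e$, reduces every term of \eqref{oit} to a scalar multiple of $e \otimes e$ via the rank-one identities, and concludes by comparing coefficients. The only cosmetic difference is that you normalize $e$ to be a unit vector, which is harmless (and in fact unnecessary, since $|x \otimes e|^2 = \|x\|^2 \, (e \otimes e)$ holds for any nonzero $e$).
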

\begin{proof}
Let $e$ be a non-zero vector of ${\mathcal H}$ and set $A_i=x_i
\otimes e$. It follows from the elementary properties of rank one
operators and equality \eqref{oit} that
\begin{eqnarray*}
\sum_{i=1}^nt_i\Big\|x_i-\sum_{j=1}^n t_jx_j\Big\|^2 e\otimes e&=&
\sum_{i=1}^nt_i\Big|\Big (x_i-\sum_{j=1}^n
t_jx_j\Big) \otimes e\Big|^2\\
&=&\sum_{i=1}^nt_i\Big|A_i-\sum_{j=1}^n t_jA_j\Big|^2\\&=&
\sum_{i=1}^nt_i|A_i|^2 - \Big|\sum_{j=1}^nt_jA_j\Big|^2\\&=&
\sum_{i=1}^nt_i|x_i\otimes e|^2 - \Big|\Big (\sum_{j=1}^n
t_jx_j\Big )\otimes e\Big|^2\\
&=& \Big (\sum_{i=1}^nt_i\|x_i\|^2 - \Big\|\sum_{j=1}^nt_jx_j\Big
\|^2\ \Big ) e\otimes e\,,
\end{eqnarray*}
from which we conclude the result.
\end{proof}


\noindent From now on we restrict ourselves to \eqref{oin}.

\noindent A special case (for $X={\mathbb C}$) of equality \eqref{0}
is
\begin{eqnarray*}
\sum_{i=1}^n\Big|z_i-\frac{1}{n}\sum_{j=1}^n z_j\Big|^2=
\sum_{i=1}^n|z_i|^2 - n\Big|\frac{1}{n}\sum_{j=1}^nz_j\Big|^2\qquad
(z_1, \cdots, z_n\in {\mathbb C})\,.
\end{eqnarray*}
This equality is in turn a special case (when $A=\textrm{diag}(z_1,
\cdots, z_n)$ and $\textrm{tr}([a_{ij}]):=\sum_{i=1}^na_{ii}$\,) of
the next equality concerning the usual normalized trace functional.
\begin{theorem}
Let $\widehat{{\textrm tr}}(A)=\textrm{tr}(A)/\textrm{tr}(I)$ be the
normalized trace on $M_n(\mathbb{C})$. Then
\begin{eqnarray*}
\|A-\widehat{{\textrm tr}}(A)\|_2^2=\|A\|^2_2 -
\textrm{tr}(I)|\widehat{{\textrm tr}}(A)|^2\,.
\end{eqnarray*}
\end{theorem}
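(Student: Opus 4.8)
The plan is to work in the Hilbert space $M_n(\mathbb{C})$ equipped with the Hilbert--Schmidt inner product $\langle A, B\rangle = \textrm{tr}(B^*A)$, so that $\|A\|_2^2 = \textrm{tr}(A^*A)$. The first point to settle is the meaning of the notation: since $\widehat{\textrm{tr}}(A)$ is a scalar, the expression $A - \widehat{\textrm{tr}}(A)$ must be read as $A - \widehat{\textrm{tr}}(A)\,I$. Abbreviating $c := \widehat{\textrm{tr}}(A) = \textrm{tr}(A)/\textrm{tr}(I)$, the goal becomes
\[ \|A - cI\|_2^2 = \|A\|_2^2 - \textrm{tr}(I)\,|c|^2 . \]

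The cleanest route, and the one most in keeping with the inner-product-space theme of the paper, is to observe that $cI = \widehat{\textrm{tr}}(A)\,I$ is precisely the orthogonal projection of $A$ onto the one-dimensional subspace $\mathbb{C}I$, since $\langle A, I\rangle / \langle I, I\rangle = \textrm{tr}(A)/\textrm{tr}(I) = c$. Equivalently, $A - cI$ is orthogonal to $I$ because $\langle A - cI, I\rangle = \textrm{tr}(A) - c\,\textrm{tr}(I) = 0$. The Pythagorean theorem applied to the orthogonal decomposition $A = cI + (A - cI)$ then gives $\|A\|_2^2 = \|cI\|_2^2 + \|A - cI\|_2^2$, and since $\|cI\|_2^2 = |c|^2\,\textrm{tr}(I^*I) = |c|^2\,\textrm{tr}(I)$, rearranging yields the claim. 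This exhibits the theorem as one more instance of the variance-type decomposition $\|A\|^2 = \|\textrm{mean}\|^2 + \|A - \textrm{mean}\|^2$ that runs through the whole paper.

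Alternatively, I would simply expand directly: writing $(A - cI)^*(A - cI) = A^*A - c A^* - \bar c A + |c|^2 I$ and taking the trace, I would use linearity together with $\textrm{tr}(A^*) = \overline{\textrm{tr}(A)}$ and the defining relation $\textrm{tr}(A) = c\,\textrm{tr}(I)$. Each of the two cross terms $c\,\textrm{tr}(A^*)$ and $\bar c\,\textrm{tr}(A)$ reduces to $|c|^2\,\textrm{tr}(I)$, so that together with the $|c|^2\,\textrm{tr}(I)$ coming from the term $|c|^2 I$ they collapse to a single $-|c|^2\,\textrm{tr}(I)$, producing the identity. There is essentially no serious obstacle here: the statement is a one-line consequence of the Hilbert--Schmidt structure. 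The only points demanding care are the implicit scalar-to-operator conversion $\widehat{\textrm{tr}}(A) \mapsto \widehat{\textrm{tr}}(A)\,I$ and the correct bookkeeping of the complex conjugates in the cross terms, equivalently the verification that $A - cI$ is genuinely traceless and hence orthogonal to $I$. I would present the orthogonal-projection argument as the main proof and relegate the expansion to a remark.
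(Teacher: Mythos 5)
Your proposal is correct, and your primary argument takes a genuinely different (and arguably more illuminating) route than the paper's. The paper proves the identity by brute force: it writes out $\big(A^*-\frac{\overline{\textrm{tr}(A)}}{\textrm{tr}(I)}I\big)\big(A-\frac{\textrm{tr}(A)}{\textrm{tr}(I)}I\big)$, takes the trace term by term, and uses $\textrm{tr}(A^*)=\overline{\textrm{tr}(A)}$ to collapse the cross terms --- exactly the computation you relegate to your closing remark, so your fallback argument \emph{is} the paper's proof. Your main argument instead observes that $\widehat{\textrm{tr}}(A)\,I$ is the orthogonal projection of $A$ onto $\mathbb{C}I$ in the Hilbert--Schmidt inner product, checks $\langle A-\widehat{\textrm{tr}}(A)I,\,I\rangle=0$, and invokes the Pythagorean theorem. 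Both are one-liners, but yours buys conceptual clarity: it exhibits the theorem as the variance decomposition in the Hilbert space $(C_2,\langle\cdot,\cdot\rangle)$ with ``mean'' equal to the projection onto the identity, which connects it directly to the statistical interpretation $\textrm{Var}(f)=E(|f|^2)-|E(f)|^2$ that motivates the whole paper, whereas the paper's expansion, while self-contained and requiring no geometric setup, leaves that structure implicit. One small point of care you handled correctly: the coefficient $\textrm{tr}(I)$ (rather than $\textrm{tr}(I)^{-1}$ or $1$) in front of $|\widehat{\textrm{tr}}(A)|^2$ comes precisely from $\|I\|_2^2=\textrm{tr}(I)$, which your projection argument makes transparent.
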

\begin{proof}
\begin{eqnarray*}
\left\|A-\widehat{{\textrm tr}}(A)\right\|_2^2&=&
\textrm{tr}\left|A-
\frac{\textrm{tr}(A)}{\textrm{tr}(I)}I\right|^2\\
&=& \textrm{tr}\left(\left(A^*-
\frac{\overline{\textrm{tr}(A)}}{\textrm{tr}(I)}I\right)\left(A-\frac{\textrm{tr}(A)}{\textrm{tr}(I)}I\right)\right)\\
&=& \textrm{tr}(A^*A) -
\frac{\textrm{tr}(A^*)\textrm{tr}(A)}{\textrm{tr}(I)} -
\frac{\overline{\textrm{tr}(A)}\textrm{tr}(A)}{\textrm{tr}(I)}
+ \frac{|\textrm{tr}(A)|^2}{\textrm{tr}(I)^2}\textrm{tr}(I)\\
&=& \textrm{tr}|A|^2 -
\textrm{tr}(I)\left|\frac{\textrm{tr}(A)}{\textrm{tr}(I)}\right|^2
\qquad\qquad(\textrm{by~} \textrm{tr}(A^*)=\overline{\textrm{tr}(A)})\\
&=&\|A\|^2_2 - \textrm{tr}(I)|\widehat{{\textrm tr}}(A)|^2\,.
\end{eqnarray*}
\end{proof}


We need the following lemma which can be deduced from
\cite[Lemma~4]{B-K2} and \cite[p.~20]{SIM} (see also
\cite[Lemma~2.1]{H-K-M}).

\begin{lemma}\label{lem2}
Let $A_1, \cdots, A_n$ be positive operators in $C_p$ for some
$p>0$. \begin{enumerate} \item[(i)] If  $ 0 < p < 1$, then
\begin{eqnarray*}
n^{p-1}\sum_{i=1}^n \|A_i\|_p^p \leq \Big \|\sum_{i=1}^n A_i
\Big\|_p^p \leq \sum_{i=1}^n \|A_i\|_p^p\, .
\end{eqnarray*}
\item[(ii)] If $1 \leq p < \infty$, then
 \begin{eqnarray*} \sum_{i=1}^n \|A_i\|_p^p \leq
\Big\|\sum_{i=1}^n A_i \Big\|_p^p \leq n^{p-1} \sum_{i=1}^n
\|A_i\|_p^p\,.
\end{eqnarray*}
\end{enumerate}

\end{lemma}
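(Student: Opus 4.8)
The plan is to reduce both inequalities to two analytic ingredients and then glue them together with the elementary scalar power-mean inequality. Since each $A_i$ is positive, $|A_i|=A_i$ and hence $\|A_i\|_p^p=\textrm{tr}(A_i^p)$, and likewise $\bigl\|\sum_{i=1}^n A_i\bigr\|_p^p=\textrm{tr}\bigl((\sum_{i=1}^n A_i)^p\bigr)$ because $\sum_{i=1}^n A_i$ is again positive. The two facts I would isolate are: (a) the sub/superadditivity of the map $A\mapsto\textrm{tr}(A^p)$ on positive operators (a Rotfel'd-type trace inequality), and (b) the triangle inequality for $\|\cdot\|_p$ together with its reverse for positive operators in the quasi-norm range $0<p<1$. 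These are precisely the contents of \cite[Lemma~4]{B-K2} and \cite[p.~20]{SIM}.

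First I would handle the \emph{inner} bounds, namely the upper estimate in (i) and the lower estimate in (ii). For positive $A,B$ the function $f(t)=t^p$ satisfies $f(0)=0$ and is concave for $0<p\leq 1$ and convex for $p\geq 1$; the associated trace inequality then gives $\textrm{tr}((A+B)^p)\leq\textrm{tr}(A^p)+\textrm{tr}(B^p)$ when $0<p\leq 1$ and the reverse inequality when $p\geq 1$. Iterating over the $n$ summands yields $\bigl\|\sum_{i=1}^n A_i\bigr\|_p^p\leq\sum_{i=1}^n\|A_i\|_p^p$ for $0<p<1$ and the reverse for $p\geq 1$, which are exactly the two inner bounds.

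Next I would obtain the \emph{outer} bounds, the lower estimate in (i) and the upper estimate in (ii), by passing through the $p$-norm and then invoking the scalar power-mean inequality. For $p\geq 1$, Minkowski's inequality gives $\bigl\|\sum_{i=1}^n A_i\bigr\|_p\leq\sum_{i=1}^n\|A_i\|_p$, whereas for $0<p<1$ and positive operators the reverse triangle inequality gives $\bigl\|\sum_{i=1}^n A_i\bigr\|_p\geq\sum_{i=1}^n\|A_i\|_p$. Writing $a_i=\|A_i\|_p\geq 0$ and using that $t\mapsto t^p$ is increasing, it then remains to compare $(\sum_{i=1}^n a_i)^p$ with $\sum_{i=1}^n a_i^p$: by Jensen's inequality (convexity of $t\mapsto t^p$ for $p\geq 1$, concavity for $0<p<1$) one has $(\sum_{i=1}^n a_i)^p\leq n^{p-1}\sum_{i=1}^n a_i^p$ for $p\geq 1$ and $(\sum_{i=1}^n a_i)^p\geq n^{p-1}\sum_{i=1}^n a_i^p$ for $0<p<1$. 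Chaining these relations produces the remaining two bounds.

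The main obstacle, I expect, is the quasi-norm range $0<p<1$: there $\|\cdot\|_p$ fails to be subadditive, so the ordinary triangle inequality is unavailable and one must instead use the reverse triangle inequality, whose validity rests essentially on the hypothesis $A_i\geq 0$. Keeping careful track of which direction (direct or reverse) is in force for each regime of $p$, and confirming that positivity is exactly what licenses both the Rotfel'd step and the reverse Minkowski step, is where the care lies; the remainder is routine bookkeeping with the monotone power-mean inequality.
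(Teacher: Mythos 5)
Your proposal is correct, but it is worth noting at the outset that the paper itself contains no proof of Lemma~\ref{lem2}: the authors merely remark that it can be deduced from \cite[Lemma~4]{B-K2} and \cite[p.~20]{SIM}, so what you have written is a reconstruction of the deduction they leave to the references, built from essentially the same ingredients. Your bookkeeping is right in every case: the inner bounds come from the McCarthy--Rotfel'd trace inequalities $\textrm{tr}\,(A+B)^p\le \textrm{tr}\,A^p+\textrm{tr}\,B^p$ for $0<p\le1$ (reversed for $p\ge1$), iterated over the positive partial sums, and the outer bounds come from Minkowski (resp.\ its reverse) followed by the scalar power-mean inequality, with every direction of inequality handled correctly. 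The one point where your argument leans on something essentially as deep as the statement being proved is the reverse triangle inequality $\bigl\|\sum_{i}A_i\bigr\|_p\ge\sum_i\|A_i\|_p$ for positive operators when $0<p<1$: this is true, but it is not an off-the-shelf triviality --- the standard proof runs through Ky Fan's majorization $\lambda(A+B)\prec\lambda^{\downarrow}(A)+\lambda^{\downarrow}(B)$, Karamata's inequality for the concave function $t\mapsto t^p$, and then the scalar reverse Minkowski inequality --- so in a written-out version it should be either cited precisely or proved. If you want to avoid it altogether, the outer bound in (i) follows in one line from L\"owner's theorem that $t\mapsto t^p$ is operator concave for $0<p\le1$: then $\bigl(\frac{1}{n}\sum_i A_i\bigr)^p\ge\frac{1}{n}\sum_i A_i^p$, and taking traces gives $\bigl\|\sum_i A_i\bigr\|_p^p\ge n^{p-1}\sum_i\|A_i\|_p^p$ directly; dually, convexity of $A\mapsto\textrm{tr}\,A^p$ for $p\ge1$ gives the outer bound in (ii) without passing through Minkowski and Jensen separately. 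In short: your route is valid and makes the logical dependencies explicit, which the paper's bare citation does not, at the cost of invoking one nontrivial quasi-norm fact that deserves its own justification.
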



Note that the commutative version of Lemma \ref{lem2} for scalars
follows from the well-known H\"{o}lder inequality (see, e.g.,
\cite[p.~88]{BHA}).

The next theorem is our second main result. It can be regarded as a
generalization of \eqref{4} in inequalities.


\begin{theorem}\label{th2} Let $A_1, \cdots, A_n \in C_p$ for some
$p>0$.
\begin{itemize}
\item[(i)] If $0<p<2$, then
$$
\sum_{i=1}^n\Big \|A_i-\frac{1}{n}\sum_{j=1}^n A_j\Big\|_p^p\geq
n^{\frac{p}{2}-1}\sum_{i=1}^n\|A_i\|_p^p - n\Big
\|\frac{1}{n}\sum_{j=1}^nA_j\Big\|_p^p\, .$$
\item[(ii)] If $ 2< p< \infty$, then
$$n^{\frac{p}{2}-1}\sum_{i=1}^n\|A_i\|_p^p -
n\Big\|\frac{1}{n}\sum_{j=1}^nA_j\Big\|_p^p \geq
\sum_{i=1}^n\Big\|A_i-\frac{1}{n}\sum_{j=1}^n A_j\Big\|_p^p\, .$$
\end{itemize}
\end{theorem}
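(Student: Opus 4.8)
The plan is to reduce everything to the positive-operator identity \eqref{oin} and then feed it into Lemma \ref{lem2} at the halved exponent $q=p/2$. Writing $M=\frac{1}{n}\sum_{j=1}^nA_j$ and $B_i=A_i-M$, I would first note that each of $A_i$, $B_i$, $M$ lies in $C_p$, so the positive operators $|A_i|^2,\ |B_i|^2,\ |M|^2$ all lie in $C_{q}$, and the relation $\|X\|_p^p=\big\||X|^2\big\|_{p/2}^{p/2}$ recorded in the preliminaries converts every Schatten $p$-norm in the statement into a $q$-norm of a positive operator: $\|B_i\|_p^p=\big\||B_i|^2\big\|_q^q$, $\|A_i\|_p^p=\big\||A_i|^2\big\|_q^q$, and $\|M\|_p^p=\big\||M|^2\big\|_q^q$. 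The structural input is that \eqref{oin} reads as the operator equation
$$\sum_{i=1}^n|A_i|^2=\sum_{i=1}^n|B_i|^2+\sum_{i=1}^n|M|^2,$$
so the positive operator $\sum_i|A_i|^2$ is exhibited in two ways as a sum of positive operators: as a sum of the $n$ terms $|A_i|^2$, and (after splitting $n|M|^2$ into $n$ equal copies) as a sum of the $2n$ terms $\{|B_i|^2\}\cup\{|M|^2\}$. Applying $\|\cdot\|_q^q$ to both groupings and invoking the appropriate half of Lemma \ref{lem2} for each should produce the two asserted inequalities.

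For part (i) I would use that $0<p<2$ gives $0<q<1$, so Lemma \ref{lem2}(i) applies. Its left inequality, applied to the $n$ operators $|A_i|^2$, gives $n^{q-1}\sum_i\||A_i|^2\|_q^q\le\big\|\sum_i|A_i|^2\big\|_q^q$, i.e. $n^{\frac p2-1}\sum_i\|A_i\|_p^p\le\big\|\sum_i|A_i|^2\big\|_q^q$; its right (subadditivity) inequality, applied to the $2n$ operators $\{|B_i|^2,|M|^2\}$, gives $\big\|\sum_i|A_i|^2\big\|_q^q\le\sum_i\|B_i\|_p^p+n\|M\|_p^p$. Chaining and rearranging yields exactly $\sum_i\|B_i\|_p^p\ge n^{\frac p2-1}\sum_i\|A_i\|_p^p-n\|M\|_p^p$.

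For part (ii) the roles reverse: $2<p<\infty$ gives $1<q<\infty$, so Lemma \ref{lem2}(ii) applies. Its left (superadditivity) inequality on the $2n$ operators $\{|B_i|^2,|M|^2\}$ gives $\sum_i\|B_i\|_p^p+n\|M\|_p^p\le\big\|\sum_i|A_i|^2\big\|_q^q$, while its right inequality on the $n$ operators $|A_i|^2$ gives $\big\|\sum_i|A_i|^2\big\|_q^q\le n^{\frac p2-1}\sum_i\|A_i\|_p^p$. Combining and rearranging yields $\sum_i\|B_i\|_p^p\le n^{\frac p2-1}\sum_i\|A_i\|_p^p-n\|M\|_p^p$. (The excluded value $p=2$ is the case $q=1$, where both halves of Lemma \ref{lem2} are equalities and the estimate collapses to the identity \eqref{4}.)

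The calculations are routine once the set-up is fixed; the step I expect to be the main obstacle is the bookkeeping of cardinalities. One must apply the count-sensitive inequality (the one carrying the factor $n^{q-1}$) to the $n$-term grouping $\sum_i|A_i|^2$, and the count-insensitive sub-/super-additivity inequality to the $2n$-term grouping $\{|B_i|^2,|M|^2\}$. Reversing these roles, or failing to split $n|M|^2$ into $n$ separate summands, would produce the wrong power of $n$ on the mean term (for instance $n^{p/2}$ in place of $n$) and would weaken or invalidate the stated bound; so I would take care to record which half of Lemma \ref{lem2} is count-free and to check that each of $|A_i|^2,\,|B_i|^2,\,|M|^2$ indeed belongs to $C_{p/2}$ before applying it.
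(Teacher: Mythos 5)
Your proof is correct and takes essentially the same route as the paper's: both convert every term to a $\|\cdot\|_{p/2}$-norm of a positive operator via $\|X\|_p^p=\big\||X|^2\big\|_{p/2}^{p/2}$, apply the count-free sub-/super-additivity half of Lemma \ref{lem2} to the $2n$-term grouping $\{|B_i|^2\}\cup\{|M|^2\}$, use equality \eqref{oin} to identify that sum with $\sum_i|A_i|^2$, and finish with the $n^{\frac{p}{2}-1}$ half of Lemma \ref{lem2} applied to the $n$ operators $|A_i|^2$. The cardinality bookkeeping you single out as the delicate point is exactly how the paper's displayed chain is organized (it writes out part (i) and dismisses part (ii) as a similar argument, which you have supplied explicitly).
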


\begin{proof}
Let $0 < p < 2$. Then
\begin{eqnarray*}
\lefteqn{\sum_{i=1}^n\Big\|A_i-\frac{1}{n}\sum_{j=1}^n A_j\Big\|_p^p
+
n\Big\|\frac{1}{n}\sum_{j=1}^nA_j\Big\|_p^p}\\
&=& \sum_{i=1}^n\Big\|\,\Big|A_i-\frac{1}{n}\sum_{j=1}^n
A_j\Big|^2\Big\|_{p/2}^{p/2} +
n\Big\|\,\Big|\frac{1}{n}\sum_{j=1}^nA_j\Big|^2\Big\|_{p/2}^{p/2}\\
&\geq& \Big\|\sum_{i=1}^n\Big|A_i-\frac{1}{n}\sum_{j=1}^n
A_j\Big|^2+ n\Big|\frac{1}{n}\sum_{j=1}^nA_j\Big|^2\Big\|_{p/2}^{p/2}\\
&&\,\,\,\,\,\,\,\,\,\,\, \quad
(\textrm{by the second inequality of Lemma\, \ref{lem2}(i)})\\
&=& \Big\|\sum_{i=1}^n|A_i|^2\Big\|_{p/2}^{p/2}
\qquad (\textrm{by equality \, \eqref{oin}}) \\
&\geq& n^{\frac{p}{2}-1}\sum_{i=1}^n\Big\|\,|A_i|^2\Big\|_{p/2}^{p/2}\\
&&\,\,\,\,\,\,\,\,\,\,\, \quad (\textrm{by the first inequality of
Lemma\, \ref{lem2}(i)})\\
&=& n^{\frac{p}{2}-1}\sum_{i=1}^n \|A_i\|_p^p\,.
\end{eqnarray*}

This proves the first part of the theorem. By a similar argument,
one can prove the second part.
\end{proof}


The next theorem may be compared to Theorem \ref{th2}. It can also
be viewed as a variance of \eqref{4}.

\begin{theorem} Let $A_1, \cdots, A_n \in C_p$ for some
$p>0$. Then
$$
\sum_{i=1}^n\Big \|A_i-\frac{1}{n}\sum_{j=1}^n A_j\Big\|_p^2\geq
n^{\frac{2}{p}-1}\sum_{i=1}^n\|A_i\|_p^2 -
n\Big\|\frac{1}{n}\sum_{j=1}^nA_j\Big\|_p^2$$

\noindent for $0<p<2$; and
$$n^{\frac{2}{p}-1}\sum_{i=1}^n\|A_i\|_p^2 -
n\Big\|\frac{1}{n}\sum_{j=1}^nA_j\Big\|_p^2 \geq
\sum_{i=1}^n\Big\|A_i-\frac{1}{n}\sum_{j=1}^n A_j\Big\|_p^2$$
\noindent for $ 2\leq p< \infty$.
\end{theorem}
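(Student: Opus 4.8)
The plan is to mirror the proof of Theorem~\ref{th2}, replacing the exponent $p$ by $2$ throughout and exploiting the identity $\|A\|_p^2=\big\||A|^2\big\|_{p/2}$ recorded in the introduction. Writing $s=p/2$ and setting $B_i=\big|A_i-\frac1n\sum_j A_j\big|^2$, $C_i=|A_i|^2$ and $D=\big|\frac1n\sum_j A_j\big|^2$, all of which are positive operators, the three quantities in the statement become $\sum_i\|B_i\|_s$, $\sum_i\|C_i\|_s$ and $n\|D\|_s$ (first powers of the Schatten $s$-norm), while Corollary~\ref{cormain}(ii), i.e. equality~\eqref{oin}, reads $\sum_i B_i+nD=\sum_i C_i$. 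Thus the whole problem reduces to comparing $\sum_i\|B_i\|_s+n\|D\|_s$ with $n^{1/s-1}\sum_i\|C_i\|_s$ under the single operator constraint $\sum_i B_i+nD=\sum_i C_i$, where $0<s<1$ corresponds to $0<p<2$ and $s\ge1$ to $2\le p<\infty$ (note $2/p=1/s$, so $n^{2/p-1}=n^{1/s-1}$).

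With this reduction I would run the same two-step chain as in Theorem~\ref{th2}: first pass from a sum of Schatten norms of positive operators to the Schatten norm of their sum, then redistribute the norm of the sum back across the $n$ summands with the dimensional factor $n^{1/s-1}$. Concretely, for $0<p<2$ I would collapse $\sum_i B_i+nD$ into $\sum_i C_i$ via~\eqref{oin}, exactly as in the displayed computation for Theorem~\ref{th2}, and then invoke the two estimates of Lemma~\ref{lem2}(i) applied to the positive operators $C_1,\dots,C_n$. The range $2\le p<\infty$ should be handled symmetrically through Lemma~\ref{lem2}(ii), which reverses both constituent inequalities and thereby flips the overall comparison, accounting for the switch of the inequality sign between the two parts of the statement.

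The step I expect to be the genuine obstacle is the passage between a sum of \emph{first-power} Schatten norms and the norm of the sum, because Lemma~\ref{lem2} is phrased for the $s$-th powers $\|\cdot\|_s^s$ rather than for $\|\cdot\|_s$. To feed the lemma into a first-power statement one must interpose a power-mean (H\"older) comparison between the $\ell_1$ and $\ell_s$ norms of the scalar sequence $(\|C_i\|_s)_i$, namely
\[
\sum_{i} a_i \;\le\; \Big(\sum_{i} a_i^{s}\Big)^{1/s} \;\le\; n^{1/s-1}\sum_{i} a_i \qquad (0<s<1),
\]
with both inequalities reversed for $s\ge1$. The delicate part is to combine this scalar comparison with the operator inequalities of Lemma~\ref{lem2} so that the powers of $n$ line up to exactly $n^{1/s-1}=n^{2/p-1}$ and so that the direction of each link is the one demanded by the statement; since for $0<s<1$ the quasi-norm $\|\cdot\|_s$ behaves very differently from a genuine norm on sums of positive operators, pinning down the correct orientation of that comparison is where the argument must be carried out with the greatest care. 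Once the bookkeeping of these exponents and directions is settled, both displayed inequalities follow by assembling the collapse via~\eqref{oin}, the relevant half of Lemma~\ref{lem2}, and the power-mean estimate above.
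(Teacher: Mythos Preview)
Your outline is exactly the paper's proof: rewrite via $\|A\|_p^2=\big\||A|^2\big\|_{p/2}$, collapse $\sum_i B_i+nD$ into $\sum_i C_i$ through \eqref{oin}, and then combine the second inequality of Lemma~\ref{lem2}(i) with the scalar power-mean bound $(\sum_i a_i^{s})^{1/s}\le n^{1/s-1}\sum_i a_i$ (what the paper calls ``a scalar version of Lemma~\ref{lem2}(ii)''), with all three links reversed for $p\ge2$. The step you flagged as delicate---passing from $\sum_i\|B_i\|_{s}+n\|D\|_{s}$ to $\big\|\sum_i B_i+nD\big\|_{s}$ for $0<s<1$---is used by the paper as well, without citation, and is indeed an independent reverse-Minkowski fact for positive operators that does not fall out of Lemma~\ref{lem2} together with the scalar comparison alone.
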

\begin{proof}
Let $0 < p < 2$. Then
\begin{eqnarray*}
\lefteqn{\sum_{i=1}^n\Big\|A_i-\frac{1}{n}\sum_{j=1}^n A_j
\Big\|_p^2 + n\Big\|\frac{1}{n}\sum_{j=1}^nA_j\Big\|_p^2}\\
&=& \sum_{i=1}^n\Big\|\,\Big|A_i-\frac{1}{n}\sum_{j=1}^n
A_j\Big|^2\Big\|_{p/2} +
n\Big\|\,\Big|\frac{1}{n}\sum_{j=1}^nA_j\Big|^2\Big\|_{p/2}\\
&\leq& \Big\|\sum_{i=1}^n\Big|A_i-\frac{1}{n}\sum_{j=1}^n A_j\Big|^2
+
n\,\Big|\frac{1}{n}\sum_{j=1}^nA_j\Big|^2\Big\|_{p/2}\\
&=& \Big\|\sum_{i=1}^n |A_i|^2\Big\|_{p/2}\; (\textrm {by equality \eqref{oin}})\\
&\leq& \Big(\sum_{i=1}^n \|\,|A_i|^2\,\|_{p/2}^{p/2}\Big)^{2/p} \;
(\textrm{by the second inequality of
Lemma\, \ref{lem2}(i)})\\
&=& \Big(\sum_{i=1}^n \|A_i\|_p^p\Big)^{2/p}\\
&\leq & n^{\frac{2}{p}-1}\sum_{i=1}^n\Big( \|A_i\|_p^p\Big)^{2/p}
\quad
(\textrm{by a scalar version of Lemma \ref{lem2} (ii)})\\
&=& n^{\frac{2}{p}-1}\sum_{i=1}^n \|A_i\|_p^2\,.
\end{eqnarray*}
This proves the first part of the theorem. The second part of the
theorem follows from a similar argument. The second author thanks
the NSU Farquhar College of Arts and Sciences for a Mini-grant.
\end{proof}

\textbf{Acknowledgement.} The authors would like to express their
gratitude to the referee for his/her very useful suggestions.


\begin{thebibliography}{10}

\bibitem{AMI} D. Amir, \textit{Characterizations of Inner Product
Spaces} in \textit{Operator Theory: Advances and Applications}, 20,
Birkh\"{u}ser Verlag, Basel, 1986.

\bibitem{BHA} R. Bhatia, \textit{Matrix Analysis}, Springer-Verlag, New York, 1997.

\bibitem{B-K1} R. Bhatia and F. Kittaneh, \textit{Cartesian decompositions and Schatten norms},
Linear Algebra Appl. \textbf{318} (2000), 109--116.

\bibitem{B-K2} R. Bhatia and F. Kittaneh, \textit{Norm inequalities for partitioned
operators and an application},
Math. Ann. \textbf{287} (1990), no. 4, 719--726.

\bibitem{DRA} S.S. Dragomir, \textit{Some Schwarz type inequalities
for sequences of operators in Hilbert spaces}, Bull. Austral. Math.
Soc. \textbf{73} (2006), no. 1, 17--26.

\bibitem{H-P} F. Hansen and G.K. Pedersen, \textit{Jensen's operator inequality}, Bull.
London Math. Soc. \textbf{35} (2003), 553--564.

\bibitem{H-K-M} O. Hirzallah, F. Kittaneh and M.S. Moslehian,
\textit{Schatten $p$-norm inequalities related to a characterization
of inner product spaces}, arXiv: 0801.2726v1 [math.OA].

\bibitem{RAS} Th.M. Rassias, \textit{New characterizations of inner product spaces},
Bull. Sci. Math. (2) \textbf{108}
(1984), no. 1, 95--99.

\bibitem{SIM} B. Simon, \textit{Trace Ideals and their Applications},
Cambridge University Press, Cambridge,
1979.

\bibitem{ZHA} F. Zhang, \textit{On the Bohr inequality of
operators}, J. Math. Anal. Appl. \textbf{333} (2007), 1264–-1271.
\end{thebibliography}
\end{document}